\documentclass[a4paper]{amsart}
\language 2 \righthyphenmin 3
\usepackage[T1]{fontenc}
\usepackage[english]{babel}
\usepackage[cp1252]{inputenc}
\usepackage{amsthm}
\usepackage{amsmath}
\usepackage{amsfonts}
\usepackage{stmaryrd}
\usepackage{amssymb}
\usepackage{hyperref}

\newcommand{\Ca}{\mathrm{card}}
\newcommand{\e}{\varepsilon}
\newcommand{\di}{\mathrm{d}}

\newcommand{\alp}{\omega+2}

\newtheorem{remarks}{Remarks}[section]

\newtheorem{theorem}{Theorem}[section]

\newtheorem{corollary}{Corollary}[section]
\newtheorem{lemma}{Lemma}[section]


\setlength{\parindent}{0cm}

\begin{document}
\title{Equivariant Yamabe problem and Hebey--Vaugon conjecture}
\author{Farid Madani\\
}
\date{}

\address{Institut de Math\'ematiques de Jussieu, Universit\'e Pierre et Marie Curie\\
\'Equipe: d'Analyse Complexe et G\'eom\'etrie \\
175, rue Chevaleret\\
75013 Paris, France.}
\curraddr{}
\email{\href{mailto:madani@math.jussieu.fr}{madani@math.jussieu.fr}}

\subjclass{
53A30, 
53C21, 
35J20. 
 }

\keywords{Conformal metric; Isometry group; Scalar curvature; Yamabe problem.}

\maketitle

\begin{abstract}
In their study of the Yamabe problem in the presence of isometry group, E.~Hebey and M.~Vaugon announced a conjecture. This conjecture generalizes  T.~Aubin's conjecture, which has already been proven and is sufficient to solve the Yamabe problem.  In this paper,  we generalize  Aubin's theorem and we prove the Hebey--Vaugon conjecture in some new cases. 
\end{abstract}

\section{Introduction} 
Let $(M,g)$ be a compact Riemannian manifold of dimension $n\geq 3$. Denote by $I(M,g)$, $C(M,g)$ and $R_g$ the isometry group, the conformal transformations group and the scalar curvature, respectively. Let $G$ be a subgroup of the isometry group $I(M,g)$. E.~Hebey and M.~Vaugon\cite{HV} considered the following problem: 

\medskip

\paragraph{\textsc{Hebey--Vaugon problem}}\emph{Is there some $G-$invariant metric $g_0$ which minimizes the functional
$$J(g')=\frac{\int_MR_{g'}\di v(g')}{(\int_M\di v(g'))^{\frac{n-2}{n}}}$$
where $g'$ belongs to the $G-$invariant conformal class of metrics $g$ defined by:}
 $$[g]^G:=\{\tilde g=e^fg/ f\in C^\infty(M),\; \sigma^*\tilde g=\tilde g\quad \forall \sigma\in G\}$$
The positive answer would have two consequences. The first is that there exists an $I(M,g)-$invariant metric $g_0$ conformal to $g$ such that the scalar curvature $R_{g_0}$ is constant. The second is that the A.~Lichnerowicz's conjecture \cite{Lic}, stated below, is true. By the works of J.~Lelong-Ferrand\cite{Lel} and M.~Obata\cite{Oba}, we know that if $(M,g)$ is not conformal to $(S_n,g_{can})$ (the unit sphere endowed with its standard metric $g_{can}$), then $C(M,g)$ is compact and there exists a conformal metric $g'$ to $g$ such that $I(M,g')=C(M,g)$. This implies that the first consequence is equivalent to the

\medskip

\paragraph{\textsc{A.~Lichnerowicz conjecture}}\emph{ For every compact Riemannian manifold $(M,g)$ which is not conformal to the unit sphere $S_n$ endowed with its standard metric, there exists a metric $\tilde g$ conformal to $g$ for which $I(M,\tilde g)=C(M,g)$, and the scalar curvature $R_{\tilde g}$ is constant.}\\

To such metrics correspond  functions which are necessarily solutions of the Yamabe equation. In other words, if $\tilde g=\psi^{\frac{4}{n-2}}g$, $\psi$ is a $G-$invariant smooth positive function then $\psi$ satisfies 
$$\frac{4(n-1)}{n-2}\Delta_g \psi+ R_g\psi= R_{\tilde g} \psi^{\frac{n+2}{n-2}}.$$
The classical Yamabe problem, which consists to find a conformal metric with constant scalar curvature on a compact Riemannian manifold, is the particular case of the problem above when $G=\{\mathrm{id}\}$. Denote by $O_G(P)$ the orbit of $P\in M$ under $G$, $W_g$ the Weyl tensor associated to the manifold $(M,g)$ and $\omega_n$ the volume of the unit sphere $S_n$. We define the integer $\omega(P)$ at the point $P$ as 
$$\omega(P)=\inf \{i\in \mathbb N/\|\nabla^i W_g(P)\|\neq 0\}\; (\omega(P)=+\infty\text{ if }\forall i\in \mathbb N,\;\|\nabla^i W_g(P)\|=0)$$

\medskip

\paragraph{\textsc{Hebey--Vaugon conjecture}}\emph{Let $(M,g)$ be a compact Riemannian manifold of dimension $n\geq 3$ and $G$ be a subgroup of $I(M,g)$. If $(M,g)$ is not conformal to $(S_n, g_{can})$ or if the action of $G$ has no fixed point, then the following inequality holds }
\begin{equation}\label{HVI}
\inf_{g'\in [g]^G} J(g')<n(n-1)\omega_n^{2/n}(\inf_{Q\in M}\Ca O_G(Q))^{2/n}
\end{equation}
\medskip

\begin{remarks}\label{remrem}
\begin{enumerate} 
\item This conjecture is the generalization of the former T.~Aubin's conjecture \cite{Aub} for the Yamabe problem corresponding to $G=\{\mathrm{id}\}$, where the constant in the right side of the inequality is equal to $\inf_{g'\in [g_{can}]} J(g')$ for $S_n$. In this case, the conjecture is completely proved.
\item The inequality is obvious if $\inf_{g'\in [g]^G} J(g')$ is nonpositive, it is the case when there exists a Yamabe metric with nonpositive scalar curvature. 
\item If for any $Q\in M$,  $\mathrm{card} O_G(Q)=+\infty$ then this conjecture is also obvious. 
\end{enumerate}
\end{remarks}
The only results known about this conjecture are given in the following theorem:
\begin{theorem}[E.~Hebey and M.~Vaugon]\label{HV theorem}
Let $(M,g)$ be a smooth compact Riemannian manifold of dimension $n\geq 3$  and $G$ be a subgroup of $I(M,g)$.
We always have :  
$$\inf_{g'\in [g]^G} J(g')\leq n(n-1)\omega_n^{2/n}(\inf_{Q\in M}\Ca O_G(Q))^{2/n}$$
 and inequality \eqref{HVI} holds if one of the following items is satisfied. 
 \begin{enumerate}
 \item The action of $G$  on $M$ is free
 \item $3\leq \dim M\leq 11$
 \item\label{item 3}There exists a point $P$ with minimal orbit (finite) under $G$ such that $\omega(P)>(n-6)/2$ or $\omega(P)\in \{0,1,2\}$.
 \end{enumerate}
\end{theorem}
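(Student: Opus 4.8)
The plan is to establish the strict inequality \eqref{HVI} via the standard test-function method from Aubin's approach to the Yamabe problem, adapted to the $G$-invariant setting. The key quantity to control is the $G$-invariant Yamabe functional evaluated on carefully constructed test functions concentrated near a point $P$ with minimal finite orbit. Writing $k=\Ca O_G(P)=\inf_{Q\in M}\Ca O_G(Q)$, I would build a test function by taking a standard Aubin bubble $\varphi_\e$ centered at $P$ (concentrating as $\e\to 0$) and symmetrizing it over the orbit, i.e. summing the bubbles centered at the $k$ points $\sigma(P)$, $\sigma\in G$. Because $G$ acts by isometries and the orbit is finite, the bubbles are asymptotically disjoint, so the numerator and denominator of $J$ each pick up a factor of $k$ at leading order, producing the constant $n(n-1)\omega_n^{2/n}k^{2/n}$ as the reference value; the strictness must then come from a negative correction term.

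The central step is the asymptotic expansion of $J(\varphi_\e^{4/(n-2)}g)$ as $\e\to 0$. Working in $g$-normal coordinates (or conformal normal coordinates) centered at $P$, I would expand the metric and the conformal Laplacian, exploiting that the $G$-invariance forces the relevant geometric quantities to be constant along the orbit. The quantity $\omega(P)$ governs the first nonvanishing derivative of the Weyl tensor, and hence the order at which the local geometry deviates from conformal flatness. The goal is to show that this deviation contributes a correction of the form $-c(n,\omega(P))\,\e^{\,2\omega(P)+?}$ (modulo logarithmic factors in borderline dimensions) with a strictly positive constant $c$, so that for small $\e$ the functional drops strictly below the threshold. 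The case split $\omega(P)>(n-6)/2$ versus $\omega(P)\in\{0,1,2\}$ reflects exactly when the curvature correction dominates the error terms coming from the bubble's slow decay: for large $\omega(P)$ relative to $n$ the local obstruction is strong enough, while the low cases $0,1,2$ are handled by the classical Weyl-tensor / scalar-curvature computations of Aubin and Schoen.

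I would organize the argument by first reducing to the case where $\inf_{g'\in[g]^G} J(g')>0$ (by Remark \ref{remrem}, the inequality is trivial otherwise) and where the minimal orbit is finite (otherwise trivial by Remark \ref{remrem} again). Then I would record the conformal normal coordinate expansion, compute the leading behavior of $\int_M R_g\varphi_\e^{2} + \frac{4(n-1)}{n-2}|\nabla\varphi_\e|^2\,\di v(g)$ and $\int_M \varphi_\e^{2n/(n-2)}\di v(g)$ separately, and combine them into an expansion of $J$. The final step is to verify that the sign and order of the correction term yield strict inequality in each of the three itemized regimes.

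The hardest part will be the delicate asymptotic bookkeeping in the regime where $\omega(P)$ is close to the threshold $(n-6)/2$: there the curvature correction and the error terms are of comparable order, and one must track the exact constants (including the volume $\omega_n$ and the $G$-factor $k$) to confirm that the net contribution is strictly negative rather than merely nonpositive. Controlling the cross terms in the symmetrized bubble — ensuring the interactions between bubbles at distinct orbit points are of strictly higher order and do not spoil the sign — is the other technical subtlety, and it is here that the isometric $G$-action and the finiteness of the orbit are essential.
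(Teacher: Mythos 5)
This theorem is not actually proven in the paper you were given: it is quoted verbatim from Hebey--Vaugon \cite{HV}, and the present paper only builds on it. Judged against what that proof requires, your proposal has its logic exactly backwards at the decisive point, namely the regime $\omega(P)>(n-6)/2$. You claim that ``for large $\omega(P)$ relative to $n$ the local obstruction is strong enough.'' In fact the opposite is true. Since $g_{ij}=\delta_{ij}+O(r^{\omega+2})$ and $\bar\int_{S(r)}R_g\,\di\sigma=O(r^{2\omega+2})$ (Lemma \ref{LLLL}), the curvature correction to the Yamabe functional of a bubble enters at order $\varepsilon^{2\omega+4}$ (see \eqref{scal}, \eqref{eqeq1}, with a logarithm when $n=2\omega+6$), whereas the unavoidable error terms coming from the truncation in \eqref{uepsilon} and from the tails of the integrals $I_a^b(\varepsilon)$ are $O(\varepsilon^{n-2})$. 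The correction dominates the error precisely when $2\omega+4<n-2$, i.e.\ $\omega<(n-6)/2$ (equality giving the logarithmic borderline). When $\omega(P)>(n-6)/2$ --- which includes the locally conformally flat case $\omega=+\infty$, covers all of dimensions $3,4,5$, and is the regime on which items 1 and 2 ultimately rest --- every local test-function expansion collapses to $\frac{n(n-2)}{4}\omega_n^{2/n}+O(\varepsilon^{n-2})$ with no sign information, so your method cannot conclude. Hebey--Vaugon handle this case by a genuinely global argument: Schoen-type test functions built from the Green's function of the conformal Laplacian (summed over the orbit) together with the positive mass theorem; likewise the free-action case (item 1), where nothing at all is assumed about $\omega$, is obtained by passing to the quotient $M/G$ and invoking the full Aubin--Schoen resolution of the classical Yamabe problem, which again rests on the positive mass theorem. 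No purely local computation is known to replace this global input, and the entire historical difficulty of the Yamabe problem lies exactly there.

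Two further points. First, even in the regime where the local method does apply ($\omega(P)\le(n-6)/2$, which is where $\omega(P)\in\{0,1,2\}$ of item 3 becomes nontrivial), the plain sum of standard bubbles you propose is generally insufficient: when $\mu=\omega$ the spherical average $\bar\int_{S(r)}R_g\,\di\sigma$ has no preferred sign (Lemma \ref{zzz}), and one must multiply each bubble by a correction $(1-r^{\omega+2}f_i(\xi))$ with $f_i$ built from the eigenfunctions $\varphi_k$ --- this is exactly the construction of Sections 2--4 of the present paper, and the ``test functions different than T.~Aubin and R.~Schoen ones'' that its introduction attributes to Hebey--Vaugon for item 3; only when $\mu\ge\omega+1$ does the uncorrected bubble suffice, by Theorem \ref{aaa}. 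Second, your worry about interactions between bubbles at distinct orbit points is a non-issue: the functions \eqref{uepsilon} are supported in the disjoint balls $B_{P_i}(\delta)$, so there are no cross terms at all; this part of your plan (which also yields the non-strict inequality in the theorem) is fine and coincides with the paper's own construction of $\phi_\varepsilon=\sum_i\tilde\varphi_{\varepsilon,i}$.
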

The case $\omega=3$ was studied by A. Rauzy (private communications).\\

In this prove we prove the following results:

\paragraph{\textbf{Main theorem}}
\emph{The Hebey--Vaugon conjecture holds  if there exists a point $P\in M$ with minimal orbit (finite) for which $\omega(P)\leq15$ or if the degree of the leading part of $R_g$ is greater or equal to $\omega(P)+1$, in the neighborhood of this point $P$.}\\

\begin{corollary}\label{cor prin}
Hebey--Vaugon conjecture  holds  for every smooth compact Riemannian manifold $(M,g)$ of dimension $n\in [ 3, 37 ]$.
\end{corollary}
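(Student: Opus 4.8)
The plan is to deduce the corollary purely from the Main theorem together with the previously known cases recorded in Theorem~\ref{HV theorem}, via an elementary case analysis on the integer $\omega(P)$. First I would dispose of the trivial situation: if every orbit is infinite, that is $\inf_{Q\in M}\Ca O_G(Q)=+\infty$, then inequality~\eqref{HVI} holds by Remark~\ref{remrem}(3), and there is nothing to prove. Hence I may assume that the infimum of the orbit cardinalities is finite, so that it is attained at some point $P\in M$ whose orbit $O_G(P)$ is finite and minimal. For such a $P$ the quantity $\omega(P)\in\mathbb N\cup\{+\infty\}$ is well defined, and the whole question reduces to showing that for each admissible value of $\omega(P)$ one of our two criteria applies and yields the conjecture's conclusion~\eqref{HVI}.

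I would then split according to the size of $\omega(P)$. If $\omega(P)\leq 15$, the Main theorem applies directly at the point $P$ and gives~\eqref{HVI}. If instead $\omega(P)\geq 16$ (including the case $\omega(P)=+\infty$), I would invoke item~\ref{item 3} of Theorem~\ref{HV theorem}, for which it suffices to verify that $\omega(P)>(n-6)/2$. This is exactly where the dimension bound enters: since $3\leq n\leq 37$ we have $(n-6)/2\leq 31/2=15.5<16\leq\omega(P)$, so the hypothesis $\omega(P)>(n-6)/2$ holds and Theorem~\ref{HV theorem} again delivers~\eqref{HVI}. As $\omega(P)$ is either a nonnegative integer or $+\infty$, these two cases are exhaustive, and the conjecture follows for every compact $(M,g)$ with $n\in[3,37]$.

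There is no genuine analytic obstacle in this argument, since all the difficulty is already absorbed into the Main theorem, whose proof establishes~\eqref{HVI} across the full range $\omega(P)\leq 15$. The only point that requires care is the numerical compatibility of the two ranges, and it also explains the appearance of the threshold $37$: the interval $\{\omega(P)\leq 15\}$ covered by the Main theorem and the interval $\{\omega(P)>(n-6)/2\}$ covered by Theorem~\ref{HV theorem} overlap with no gap precisely when $(n-6)/2<16$, that is $n<38$, i.e. $n\leq 37$. For $n=38$ the value $\omega(P)=16$ would fall outside both ranges, which is why the corollary stops at dimension $37$.
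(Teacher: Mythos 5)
Your proof is correct and follows essentially the same route as the paper: assume a point $P$ with minimal finite orbit exists (the infinite-orbit case being trivial by Remark~\ref{remrem}), then split on $\omega(P)$, invoking the Main theorem for small $\omega(P)$ and item~\ref{item 3} of Theorem~\ref{HV theorem} for large $\omega(P)$, with the dimension bound $n\leq 37$ ensuring the two ranges overlap. The paper places the cut at $[(n-6)/2]$ rather than at $15/16$, but the two case analyses are logically equivalent, and your closing remark about $n=38$ correctly identifies why the threshold is $37$.
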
 

To prove the main theorem, we need to construct a $G-$invariant test function $\phi$ such that  
$$I_g(\phi)< n(n-1)\omega_n^{2/n}(\inf_{Q\in M}\Ca O_G(Q))^{2/n}$$
Thus, all  the difficulties are in the construction of a such function. For some cases, we can use the test functions constructed by T.~Aubin \cite{Aub} and R.~Schoen~\cite{Schoen} in the case of Yamabe problem. They have been already proven by E.~Hebey and M.~Vaugon \cite{HV}.  But the item \ref{item 3}, presented in  Theorem \ref{HV theorem}, uses test functions  different than  T.~Aubin and R.~Schoen ones.\\ 
 
We multiply  T.~Aubin's test function $u_{\e,P}$ by a function as follows:
\begin{equation}\label{phiepsilon}
\varphi_{\e}(Q)=(1- r^{\alp} f(\xi))u_{\e,P}(Q)
\end{equation}
\begin{equation}\label{uepsilon}
u_{\e,P}(Q)=\begin{cases}\biggl(\displaystyle\frac{\varepsilon}{r^2+\varepsilon^2}\biggr)^{\frac{n-2}{2}}-\biggl(\frac{\varepsilon}{\delta^2+\varepsilon^2}\biggr)^{\frac{n-2}{2}} &\mbox{ if }Q\in B_{P}(\delta)\\
0 &\mbox{ if }Q\in M-B_{P}(\delta)
\end{cases}
\end{equation}
for all $Q\in M$, where $r=d(Q,P)$ is the distance between $P$ and $Q$. $(r,\xi^j)$ is a geodesic coordinates system  in the neighborhood of $P$ and 
$B_{P}(\delta)$ is the geodesic ball of center $P$ with radius $\delta$ fixed sufficiently small. $f$ is a function  depending only on $\xi$, chosen such that $\int_{S_{n-1}}f d\sigma=0$. Without loss of generality, we  suppose that in the coordinates system $(r,\xi^j)$ we have $\det g=1+o(r^m)$ for $m\gg 1$. In fact, E.~Hebey and M.~Vaugon proved that there exists $\tilde g\in [g]^G$ for which $\det \tilde g=1+o(r^m)$ and $\inf_{g'\in [g]^G} J(g')$ does not depend on the conformal $G-$invariant metric.


\section{Computation of $\int_M R_g\varphi_\e^2dv$ }\label{computations}

\noindent Let be
\begin{equation*}
I_a^b(\e)=\int_0^{\delta/\e}\frac{t^b}{(1+t^2)^a}dt\text{ and }I_a^b=\lim_{\e\to 0}I_a^b(\e)
\end{equation*}
then $I_a^{2a-1}(\e)=\log\e^{-1}+O(1)$. If $2a-b>1$ then $I_a^b(\e)=I_a^b+O(\e^{2a-b-1})$  and by integration by parts, we establish the following relationships : 
\begin{equation}\label{rela}
 I_a^b=\frac{b-1}{2a-b-1}I_a^{b-2}=\frac{b-1}{2a-2}I_{a-1}^{b-2}=\frac{2a-b-3}{2a-2}I_{a-1}^{b},\quad \frac{4(n-2)I_n^{n+1}}{(I^{n-2}_n)^{(n-2)/n}}=n
\end{equation}
Using the inequality $(a-b)^\beta\geq a^\beta-\beta a^{\beta-1}b$ for $0<b<a$, we have for $\beta\geq 2$, $0\leq \alpha< (n-2)(\beta-1)-n$ 
\begin{equation}\label{hhh}
 \int_Mr^\alpha u_{\e,P}^\beta \di v=\omega_{n-1}I_{(n-2)\beta/2}^{\alpha+n-1}\e^{\alpha+n-\beta(n-2)/2}+O(\e^{n-2})
\end{equation}
This integral appears frequently in the following computations, and it allows us to neglect the constant term in the expression of $u_\e$, when we choose $\delta$ sufficiently small and $\e$ smaller than $\delta$. \\
\noindent Denote by $I_g$ the Yamabe functional defined for all 
$\psi\in H^1(M)$ by 
\begin{equation}\label{yamm}
I_g(\psi)=\biggr(\int_M|\nabla_g\psi|^2\di v+\frac{(n-2)}{4(n-1)}\int_MR_g\psi^2\di v\biggl)\|\psi\|_N^{-2} 
\end{equation}
where $N=2n/(n-2)$ and $\nabla_g$ is the gradient of the metric $g$.\\

\noindent The second integral of the functional $I_g$ with the scalar curvature term needs a special consideration. Let $\mu(P)$ be an integer defined as follows : $|\nabla_\beta R_g(P)|=0$ for all $|\beta|<\mu(P)$ and there exists $\gamma\in \mathbb N^{\mu(P)}$ such that $|\nabla_\gamma R_g(P)|\neq 0$ 
then 
$$R_g(Q)=\bar R +O(r^{\mu(P)+1})$$ 
where  $\bar R=r^{\mu(P)}\sum_{|\beta|= \mu}\nabla_\beta R_g(P)\xi^\beta$ is a homogeneous polynomial of degree $\mu(P)$, the $\beta$ are multi-indices. \\

For simplicity, we drop the letter $P$ in $\omega(P)$ and $\mu(P)$.\\

By E.~Hebey and M.~Vaugon~\cite{HV} results:
\begin{lemma}\label{LLLL}
 $\mu\geq \omega$, $g_{ij}=\delta_{ij}+O(r^{\omega+2})$ and $\bar\int_{S(r)}R_g=O(r^{2\omega+2})$ which implies that  $\int_{S(r)}\bar Rd\sigma=~0$ when $\mu<2\omega+2$
\end{lemma}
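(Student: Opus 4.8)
The plan is to work throughout in a conformal $G$-invariant representative realizing $\det g=1+o(r^m)$ in geodesic coordinates at $P$, which is legitimate because $\inf_{g'\in[g]^G}J(g')$ is conformally invariant and, as recalled above, such a representative exists by Hebey--Vaugon. Two facts then drive the whole proof. First, $\omega(P)$ is a conformal invariant and the Weyl tensor vanishes to order $\omega$ at $P$, so in these coordinates $W_g=O(r^\omega)$, i.e. $\nabla^k W_g(P)=0$ for $k<\omega$. Second, the normalization $\det g=1+o(r^m)$ is equivalent, up to order $m$, to the vanishing of the totally symmetrized covariant derivatives of the Ricci tensor at $P$; since a complete contraction by the (symmetric) metric is unchanged under symmetrization, this forces \emph{every} complete contraction of $\nabla^k\mathrm{Ric}(P)$ to vanish.

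First I would prove $\mu\geq\omega$, in fact $\mathrm{Ric}=O(r^\omega)$, by induction on $k\leq\omega-1$ showing $\nabla^k\mathrm{Ric}(P)=0$. Decompose $\mathrm{Riem}=W+(\text{Schouten part})$ and use the contracted second Bianchi identity $\nabla^i W_{ijkl}=(n-3)C_{jkl}$, relating the divergence of $W$ to the Cotton tensor $C\sim\nabla\mathrm{Ric}$. The fully symmetric part of $\nabla^k\mathrm{Ric}$ vanishes by the normalization; its mixed-symmetry (hook) part is governed by $\nabla^{k-1}C$, hence by $\nabla^k W(P)=0$; the curvature commutators needed to reorganize the derivatives involve only $\nabla^{<k}\mathrm{Riem}(P)$, which vanish by the inductive hypothesis together with $W(P)=0$ and $\mathrm{Ric}(P)=0$. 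Thus $\nabla^k\mathrm{Ric}(P)=0$ for $k<\omega$, so $\mathrm{Riem}=O(r^\omega)$ and $R_g=\mathrm{tr}\,\mathrm{Ric}=O(r^\omega)$, giving $\mu\geq\omega$. The bound $g_{ij}=\delta_{ij}+O(r^{\omega+2})$ is then immediate from the normal-coordinate expansion of the metric: the degree-$d$ coefficient is a universal polynomial whose linear part is $\nabla^{d-2}\mathrm{Riem}(P)$ and whose higher parts are products $\nabla^a\mathrm{Riem}(P)\cdot\nabla^b\mathrm{Riem}(P)$ with $a+b=d-4$. For $2\leq d\leq\omega+1$ one has $d-2\leq\omega-1$ and $a+b\leq\omega-3$, so no factor reaches the order $\omega$ required to be nonzero, and every coefficient vanishes.

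For the spherical average I would write $g=\delta+h$ with $h=O(r^{\omega+2})$; expanding $\det g=1+\mathrm{tr}\,h+O(h^2)$ against $\det g=1+o(r^m)$ gives $\mathrm{tr}\,h=O(r^{2\omega+4})$. Hence in $R_g=\partial_i\partial_j h_{ij}-\Delta(\mathrm{tr}\,h)+(\text{terms at least quadratic in }h,\partial h)$ the last two groups are $O(r^{2\omega+2})$, and it suffices to control $\bar\int_{S(r)}\partial_i\partial_j h_{ij}$. The spherical average of the degree-$j$ homogeneous part of $\partial_i\partial_j h_{ij}$ is the complete contraction of its symmetric tensor. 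Its linear part is a complete contraction of $\nabla^j W(P)$: since $W$ is totally trace-free any contraction of two Weyl indices dies, and the contracted Bianchi identity turns the rest into a complete contraction of $\nabla^j\mathrm{Ric}(P)$ (which vanishes by the normalization) plus commutator terms that are quadratic in curvature and so require total order $\geq 2\omega$, i.e. $j\geq 2\omega+2$; its genuinely quadratic part needs $a+b=j-2\geq 2\omega$, again $j\geq 2\omega+2$. So all homogeneous parts of degree $j\leq 2\omega+1$ average to zero and $\bar\int_{S(r)}R_g=O(r^{2\omega+2})$. Finally, writing $R_g=\bar R+O(r^{\mu+1})$ with $\bar R$ homogeneous of degree $\mu$, the order-$r^\mu$ part of the average is a multiple of $\int_{S(r)}\bar R\,d\sigma$; when $\mu<2\omega+2$ this term is of strictly lower order than the established $O(r^{2\omega+2})$, hence must vanish, which is the asserted implication.

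The main obstacle is precisely the vanishing of the complete contractions of $\nabla^j W(P)$ through order $2\omega+1$: this is where the total trace-freeness of $W$, the second Bianchi identity, and the conformal normalization $\det g=1+o(r^m)$ must be combined, and where one has to check carefully that the curvature commutators generated in reorganizing the covariant derivatives only enter at order $\geq 2\omega+2$.
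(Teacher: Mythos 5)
The paper offers no proof of Lemma \ref{LLLL} at all: it is imported directly from Hebey--Vaugon \cite{HV}, so your attempt can only be measured against that argument, and your outline does follow the same general route (conformal normal coordinates; an induction on $\nabla^k\mathrm{Ric}(P)$ via the second Bianchi identity and the Cotton tensor to get $\mu\geq\omega$ and the metric expansion; linear-versus-quadratic order counting for the spherical average). However, the second of the two ``facts'' you declare to drive the whole proof is false as stated. A complete contraction is \emph{not} invariant under symmetrization: contracting a tensor with $g^{i_1i_2}\cdots g^{i_{2p-1}i_{2p}}$ depends on the chosen pairing of indices, and only the \emph{average} over all pairings equals the corresponding contraction of the symmetrized tensor. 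Hence $\mathrm{Sym}\,\nabla^k\mathrm{Ric}(P)=0$ does not force every complete contraction of $\nabla^k\mathrm{Ric}(P)$ to vanish, and that conclusion is in fact false beyond a critical order: $\Delta_g^{\omega+1}R(P)$ is precisely such a contraction of $\nabla^{2\omega+2}\mathrm{Ric}(P)$, and Theorem \ref{aaa} of the paper asserts $(-\Delta_g)^{\omega+1}R(P)<0$; likewise Lemma \ref{zzz} exhibits a generically nonzero coefficient at order $r^{2\omega+2}$, which is exactly why the $O(r^{2\omega+2})$ in Lemma \ref{LLLL} is sharp. Taken literally, your ``fact'' would yield $\bar\int_{S(r)}R_g\,\di\sigma=O(r^{m})$ for every $m$, which is wrong.

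What is true, and what your third paragraph actually needs, is an order-restricted version: two different complete contractions of $\nabla^k\mathrm{Ric}(P)$ differ by commutator terms of the schematic form $\nabla^p\mathrm{Riem}\ast\nabla^q\mathrm{Riem}$ fully contracted, with $p+q=k-2$; once one knows $\mathrm{Riem}=O(r^{\omega})$ (i.e. after your first step $\nabla^{k}\mathrm{Ric}(P)=0$ for $k<\omega$, combined with $\nabla^{k}W(P)=0$ for $k<\omega$), each such term vanishes at $P$ unless both $p\geq\omega$ and $q\geq\omega$, so every complete contraction of $\nabla^k\mathrm{Ric}(P)$ vanishes exactly in the range $k\leq 2\omega+1$, and in general no further. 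You do gesture at this commutator count when treating the spherical average, so the skeleton of your proof can be repaired; but the repair has to be made explicit, the vanishing statement must be demoted from a standing hypothesis to a consequence of the first part of the lemma (otherwise the argument is circular in the order you present it), and the symmetrization-invariance justification must be discarded. With that correction your sketch is essentially the Hebey--Vaugon argument; without it, the central vanishing step --- which you yourself identify as the main obstacle --- has no valid justification.
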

$\bar\int$ denotes the average. Then 

\begin{equation}\label{scal}
\begin{split}
\int_MR_g\varphi_\e^2\di v & =\int_MR_gu_{\e,P}^2\di v-2\int_Mfu_{\e,P}^2R_gr^{\omega+2} \di v+\int_Mf^2u_{\e,P}^2R_gr^{2\omega+4} \di v\\
& =\e^{2\omega+4}\omega_{n-1}\bar\int_{S(r)}r^{-2\omega-2}R_g\di \sigma I_{n-2}^{n+2\omega+1}(\e)-\\ 
& 2\e^{\omega+\mu+4}I_{n-2}^{\omega+\mu+n+1}(\e)\omega_{n-1}\bar\int_{S(r)}\hspace{-0.3cm}r^{-\mu}f(\xi)\bar R \di\sigma(\xi)+O(\e^{n-2})\\
\end{split}
\end{equation}

Moreover T.~Aubin \cite{Aub3} proved that:


\begin{theorem}\label{aaa}
 If $\mu\geq\omega+1$ then there exists $C(n,\omega)>0$ such that 
$$\bar\int_{S_{n-1}(r)}R\di\sigma=C(n,\omega)(-\Delta_g)^{\omega+1} R(P)r^{2\omega+2}+o(r^{2\omega+2})$$
$(-\Delta_g)^{\omega+1} R(P)$ is negative. Then $I_g(u_{\e,P})<\frac{n(n-2)}{4}\omega_{n-1}^{2/n}$.\\
\end{theorem}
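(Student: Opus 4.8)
The plan is to prove the three assertions in order---the spherical-average expansion, the strict negativity of $(-\Delta_g)^{\omega+1}R(P)$, and finally the functional inequality---the last being obtained by inserting the first two into \eqref{scal}. For the expansion I would start from $M(r):=\bar\int_{S(r)}R_g\,\di\sigma$. Since we may assume $\det g=1+o(r^m)$ with $m\gg1$, the Riemannian area element on $S(r)$ differs from the Euclidean one only by a factor $1+o(r^m)$, so to every order that matters $M(r)$ equals the \emph{Euclidean} spherical average of $R_g\circ\exp_P$ computed in the geodesic coordinates. Taylor-expanding $R_g\circ\exp_P$ at $P$, the odd homogeneous parts integrate to zero, while for a homogeneous polynomial $Q$ of degree $2k$ one has the classical identity $\bar\int_{S(r)}Q\,\di\sigma=\bigl(2^{k}k!\,n(n+2)\cdots(n+2k-2)\bigr)^{-1}\Delta_{\mathrm{eucl}}^{k}Q\cdot r^{2k}$. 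By Lemma \ref{LLLL} every contribution of degree $<2\omega+2$ cancels, so the first surviving term is the one with $k=\omega+1$. Using $g_{ij}=\delta_{ij}+O(r^{\omega+2})$ from Lemma \ref{LLLL}, the operator $-\Delta_g$ agrees with $\Delta_{\mathrm{eucl}}$ modulo terms whose coefficients vanish to order $\omega+2$, and a degree count shows these corrections do not affect the value at $P$; hence $\Delta_{\mathrm{eucl}}^{\omega+1}$ may be replaced by $(-\Delta_g)^{\omega+1}$, giving the stated formula with $C(n,\omega)=\bigl(2^{\omega+1}(\omega+1)!\,n(n+2)\cdots(n+2\omega)\bigr)^{-1}>0$. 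The hypothesis $\mu\geq\omega+1$ is what guarantees that the leading (degree-$\mu$) part of $R_g$ is subdominant, so that this iterated-Laplacian term genuinely governs the average.

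The heart of the matter, and the step I expect to be the main obstacle, is the strict negativity. Here one must expand $R_g$ in these coordinates out to order $2\omega+2$, using the second Bianchi identity together with the hypotheses $\nabla^{i}W_g(P)=0$ for $i<\omega$ and $\det g=1+o(r^m)$. The linear-in-curvature leading part $\bar R$ contributes nothing to the average---this is exactly the vanishing $\int_{S(r)}\bar R\,\di\sigma=0$ of Lemma \ref{LLLL}---so the first term surviving at order $2\omega+2$ is quadratic in the leading derivative of the Weyl tensor, and a bookkeeping of the contractions gives $(-\Delta_g)^{\omega+1}R(P)=-c_{n,\omega}\,\|\nabla^{\omega}W_g(P)\|^{2}$ with $c_{n,\omega}>0$ (for $\omega=0$ this is the familiar relation $\Delta_g R(P)\sim\|W_g(P)\|^2$). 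Since $\|\nabla^{\omega}W_g(P)\|\neq0$ by the very definition of $\omega=\omega(P)$, strict negativity follows. Reproducing the precise positive constant is the delicate computation carried out by Aubin in \cite{Aub3}.

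Finally I would feed these into the functional. With $f\equiv0$ only the first line of \eqref{scal} survives, and by \eqref{hhh} the scalar term is $\int_MR_gu_{\e,P}^{2}\,\di v=\e^{2\omega+4}\omega_{n-1}\,C(n,\omega)(-\Delta_g)^{\omega+1}R(P)\,I_{n-2}^{n+2\omega+1}(\e)+O(\e^{n-2})$, which is strictly negative to leading order by the previous step. The terms $\int_M|\nabla_gu_{\e,P}|^{2}\,\di v$ and $\|u_{\e,P}\|_N^{2}$ produce the sphere value $\frac{n(n-2)}{4}\omega_{n-1}^{2/n}$---normalised through the relation $4(n-2)I_n^{n+1}/(I_n^{n-2})^{(n-2)/n}=n$ of \eqref{rela}---up to corrections that are $o(\e^m)$ because $\det g=1+o(r^m)$. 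Dividing, the sign of $I_g(u_{\e,P})-\frac{n(n-2)}{4}\omega_{n-1}^{2/n}$ is that of the negative scalar contribution; for $\e$ small enough this strictly negative term dominates the $O(\e^{n-2})$ remainder, whence the inequality. Weighing these competing powers of $\e$, namely $\e^{2\omega+4}I_{n-2}^{n+2\omega+1}(\e)$ against $\e^{n-2}$ through the relations \eqref{rela}, is the final verification, and it is here that the range of admissibility of the test function $u_{\e,P}$ is seen.
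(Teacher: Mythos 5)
A preliminary remark that frames the whole comparison: the paper does not prove Theorem \ref{aaa} at all. It is quoted as Aubin's result, with the proof residing entirely in \cite{Aub3}, so there is no internal argument to measure your attempt against; it can only be judged on its own terms. On those terms, your first and third steps are essentially sound. The expansion of $\bar\int_{S(r)}R\,\di\sigma$ via the classical identity for spherical averages of homogeneous polynomials, the use of Lemma \ref{LLLL} to force $\Delta_{\mathcal E}^{j}R(P)=0$ for $j\leq\omega$ so that the $j=\omega+1$ term leads, and the degree count giving $(-\Delta_g)^{\omega+1}R(P)=\Delta_{\mathcal E}^{\omega+1}R(P)$ (Christoffel symbols are $O(r^{\omega+1})$ while all derivatives of $R$ of order $\leq\omega$ vanish at $P$ since $\mu\geq\omega+1$) are all correct. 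The third step is the computation the paper itself performs in Section 3 specialized to $f\equiv0$: insert the negative average into \eqref{scal} and recover the sphere constant from \eqref{rela}. One caveat you gloss over: the negative term $\e^{2\omega+4}I_{n-2}^{n+2\omega+1}(\e)$ dominates the $O(\e^{n-2})$ remainder only when $2\omega+4<n-2$, with the logarithmic gain $\e^{n-2}\log\e^{-1}$ in the borderline case $n=2\omega+6$; for $\omega>(n-6)/2$ both terms have size $\e^{n-2}$ and the conclusion cannot be drawn. This restriction $\omega\leq(n-6)/2$ appears nowhere in your write-up; it is implicit in how the paper applies the theorem, but a self-contained proof must state it.

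The genuine gap is the second step, and you flagged it yourself: the strict negativity of $(-\Delta_g)^{\omega+1}R(P)$. You assert that ``a bookkeeping of the contractions'' yields $(-\Delta_g)^{\omega+1}R(P)=-c_{n,\omega}\|\nabla^{\omega}W_g(P)\|^{2}$ with $c_{n,\omega}>0$, and then explicitly defer that bookkeeping to Aubin. But this is not a side computation; it is the entire content of the theorem --- everything else is routine --- and the paper's own Sections 2--3 show how delicate sign questions are at exactly this order: the coefficient of $r^{2\omega+2}$ in the average is the quadratic expression $B/2-C/4-(1+\omega/2)^{2}Q$ of Lemma \ref{zzz}, and when $\bar R\neq0$ controlling its sign requires the eigenfunction decomposition \eqref{dddd}, a correction function $f$, and the polynomial estimates of Lemma \ref{lemme poly}. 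What the hypothesis $\mu\geq\omega+1$ buys is precisely $\bar R=\nabla^{ij}\bar g_{ij}\,r^{\omega}=0$, and proving that this constraint forces $B/2-C/4-(1+\omega/2)^{2}Q<0$ whenever $\bar g\neq0$ (equivalently $\nabla^{\omega}W_g(P)\neq0$) is Aubin's hard computation. So your proposal is an accurate reconstruction of the scaffolding around the theorem, but it leaves the theorem's core unproven; as such it is on par with the paper's treatment --- a citation of \cite{Aub3} --- rather than a proof.
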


From now until the end of this section, we make the assumption that $\mu=\omega$. Now, we recall some results obtained by T.~Aubin in his papers \cite{Aub5, Aub4}:\\

$\bar R$ is homogeneous polynomial of degree  $\omega$ then $\Delta_{\mathcal{E}}\bar R$ is homogeneous of degree $\omega-2$ and
$$\Delta_{\mathcal{E}}\bar R=r^{-2}(\Delta_s\bar R-\omega(n+\omega-2) \bar R)$$
where $\Delta_{\mathcal{E}}$ is the Euclidean Laplacian and  $\Delta_s$ is the Laplacian on the sphere $S_{n-1}$. $\Delta_{\mathcal{E}}^{k-1}\bar R$ is homogeneous of degree $\omega-2k+2$ and
$$\Delta^k_{\mathcal E}\bar R=r^{-2}(\Delta_s-\nu_k\mathrm{id})\Delta^{k-1}_{\mathcal E}\bar R= r^{-2k}\prod_{p=1}^k(\Delta_S-\nu_p\mathrm{id}) \bar R$$
with 
\begin{equation}\label{lam}
 \nu_k=(\omega-2k+2)(n+\omega-2k)
\end{equation}
The sequence of integers $(\nu_k)_{\{1\leq k\leq [\omega/2]\}}$ is  decreasing. It will play the role of the eigenvalues of the Laplacian on the sphere $S_{n-1}$. It is known that the eigenvalues of the geometric Laplacian are non-negative and increasing. Our $\nu_k$ are in the opposite order. \\
We know by T.~Aubin's paper \cite{Aub3} that $\Delta^{[\omega/2]}_{\mathcal E}\bar R=0$ and $\int_{S(r)}\bar R\di\sigma=0$, then 
$$q=\min\{k\in\mathbb N/ \Delta_{\mathcal E}^{k}\bar R= 0\}$$ 
is well defined and $r^{-\omega}\bar R\in \bigoplus_{k=1}^{q}E_k $, with $E_k$ the eigenspace associated to the positive eigenvalues $\nu_k$ of the Laplacian $\Delta_s$ on the sphere $S_{n-1}$. If $j\neq k$, then $E_k$ is orthogonal to $E_j$, for the standard scalar product in $H_1^2(S_{n-1})$. Moreover, since $\int \bar R d\sigma=0$ there exist $\varphi_k\in E_k$ (eigenfunctions of $\Delta_s$) such that
\begin{equation}\label{dddd}
\bar R=r^{\omega}\Delta_s\sum_{k=1}^q\varphi_k=r^{\omega}\sum_{k=1}^q\nu_k\varphi_k
\end{equation}

According to Lemma \ref{LLLL}, we can split the metric $g$ in the following way: 
\begin{equation}
 g=\mathcal E+h
\end{equation}
where $\mathcal E$ is the Euclidean metric and $h$ is a symmetric 2-tensor defined in our geodesic coordinates system  by
\begin{equation}
h_{ij}=r^{\omega+2}\bar g_{ij}+r^{2(\omega+2)}\hat{g}_{ij}+\tilde h_{ij}\text{ and }h_{ir}=h_{rr}=0
\end{equation}
where $\bar g$, $\hat g$ and $\tilde h$ are symmetric 2-tensors defined on the sphere $S_{n-1}$. We denote by $s$ the standard metric on the sphere, $\nabla$, $\Delta$ are the associated gradient and Laplacian on $S_{n-1}$. By straightforward computations, Aubin \cite{Aub5} proved that:

\begin{lemma}\label{zzz}
$$\bar R=\nabla^{ij}\bar g_{ij}r^\omega\text{ and }$$
$$\bar\int_{S_{n-1}(r)}\hspace{-0.7cm}R\di\sigma =[B/2-C/4-(1+\omega/2)^2Q]r^{2(\omega+1)}+o(r^{2(\omega+1)})$$
where $B=\bar\int_{S_{n-1}}\hspace{-0.5cm}\nabla^i\bar g^{jk}\nabla_j\bar g_{ik}\di\sigma$, $C=\bar\int_{S_{n-1}}\hspace{-0.5cm}\nabla^i\bar g^{jk}\nabla_i\bar g_{jk}\di\sigma$ and $Q=
\bar\int_{S_{n-1}}\bar g_{ij}\bar g^{ij}\di\sigma$
\end{lemma}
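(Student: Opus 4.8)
The plan is to prove Lemma \ref{zzz}, which has two assertions: first the identity $\bar R = r^\omega\,\nabla^{ij}\bar g_{ij}$, and second the asymptotic expansion of the averaged scalar curvature up to order $2(\omega+1)$ in terms of the quadratic integrals $B$, $C$, $Q$ of the leading tensor $\bar g$. I would treat these as separate computations, both driven by the splitting $g=\mathcal E+h$ with $h_{ij}=r^{\omega+2}\bar g_{ij}+r^{2(\omega+2)}\hat g_{ij}+\tilde h_{ij}$ and $h_{ir}=h_{rr}=0$.

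\emph{First identity.} I would expand the scalar curvature $R_g$ in powers of the perturbation $h$. Writing $g_{ij}=\delta_{ij}+h_{ij}$ with $h$ of leading order $r^{\omega+2}$, the scalar curvature admits an expansion whose linear-in-$h$ part dominates at the lowest nontrivial order $r^\omega$. To first order, $R_g \approx \partial^a\partial^b h_{ab}-\Delta_{\mathcal E}(\operatorname{tr}h)$, but the Hebey--Vaugon normalization (Lemma \ref{LLLL}) forces $\det g=1+o(r^m)$, which kills the trace term $\operatorname{tr}h$ to high order; hence the leading contribution reduces to the double divergence $\partial^i\partial^j h_{ij}$ of the leading tensor $r^{\omega+2}\bar g_{ij}$. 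Converting the flat partial derivatives into the spherical covariant derivatives $\nabla$ on $S_{n-1}$ (using $h_{ir}=h_{rr}=0$, so only tangential components appear) and extracting the homogeneous-degree-$\omega$ part, one obtains $\bar R=r^\omega\nabla^{ij}\bar g_{ij}$. The main care here is bookkeeping the radial versus tangential derivatives and confirming that no term from $\hat g$ or $\tilde h$ reaches down to order $r^\omega$.

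\emph{Second identity.} For the expansion of $\bar\int_{S_{n-1}(r)}R\,\di\sigma$ to order $r^{2(\omega+1)}$, I would carry the curvature expansion of $R_g$ one order further, retaining both the next linear term (from $r^{2(\omega+2)}\hat g_{ij}$) and the quadratic-in-$\bar g$ terms arising at order $r^{2\omega+2}$. Upon averaging over the sphere $S_{n-1}(r)$, the purely linear contributions integrate to quantities that vanish or can be discarded: the term from $\hat g$ averages out by the same divergence structure and the normalization, and the $\bar R$-linear piece averages to zero by Lemma \ref{LLLL} (since $\int_{S(r)}\bar R\,\di\sigma=0$ in the regime $\mu<2\omega+2$). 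What survives is the average of the quadratic expression in $\bar g$ and its first derivatives. Grouping these by integration by parts on $S_{n-1}$ produces exactly the three invariants $B=\bar\int\nabla^i\bar g^{jk}\nabla_j\bar g_{ik}$, $C=\bar\int\nabla^i\bar g^{jk}\nabla_i\bar g_{jk}$, and $Q=\bar\int\bar g_{ij}\bar g^{ij}$, with the coefficients $1/2$, $-1/4$, and $-(1+\omega/2)^2$ respectively; the factor $(1+\omega/2)^2$ will enter through the radial scaling of the homogeneous tensor of degree $\omega+2$ under the Laplacian and the eigenvalue-type relation \eqref{lam}.

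The main obstacle I anticipate is the quadratic-term bookkeeping in the second identity: the second-order expansion of $R_g$ in $h$ involves products of Christoffel symbols and of $h$ with its derivatives, and many of these terms must be reorganized by repeated integration by parts on the compact sphere $S_{n-1}$ (no boundary terms) to collapse into the three canonical integrals. Keeping track of which terms cancel by the determinant normalization, which vanish on averaging by orthogonality of spherical harmonics (as in \eqref{dddd}), and which recombine via the commutation of $\nabla$ with the trace, is where the delicate work lies. I would organize this by writing $R_g = R^{(1)}+R^{(2)}+\dots$ with $R^{(k)}$ the $k$-th order term in $h$, and systematically project each onto its radial homogeneity, since only the pieces homogeneous of the correct degree contribute to the stated order $r^{2(\omega+1)}$.
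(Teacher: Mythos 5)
The paper itself gives no proof of Lemma \ref{zzz}: it is quoted as the outcome of ``straightforward computations'' from Aubin \cite{Aub5}, with details deferred to \cite{Mad3}. Your route --- expand $R_g=R^{(1)}+R^{(2)}+\dots$ in the perturbation $h$, sort terms by radial homogeneity, average over $S_{n-1}(r)$, and integrate by parts on the sphere --- is exactly the computation carried out in those references, and your treatment of the first identity is correct: the normalization $\det g=1+o(r^m)$ forces $\mathrm{tr}\,h$ to be quadratic in $h$ (hence $O(r^{2\omega+4})$) and forces $\bar g$ to be trace-free, so at order $r^\omega$ only the tangential double divergence of $r^{\omega+2}\bar g_{ij}$ survives, giving $\bar R=r^\omega\nabla^{ij}\bar g_{ij}$.

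Two points in your second computation need correction, and as written they would produce wrong coefficients. First, you cannot discard the linear contributions wholesale: in $R^{(1)}(h)=\partial^i\partial^jh_{ij}-\Delta_{\mathcal E}(\mathrm{tr}\,h)$, the double-divergence part indeed averages to zero over the closed sphere, but the trace part does not, because the determinant normalization gives $\mathrm{tr}\,h=\tfrac12\mathrm{tr}(h^2)+\dots=\tfrac12 r^{2\omega+4}\bar g_{ij}\bar g^{ij}+o(r^{2\omega+4})$ rather than $\mathrm{tr}\,h=0$; since for $F=r^m\phi(\xi)$ one has $\bar\int_{S(r)}\Delta_{\mathcal E}F\,\di\sigma=m(m+n-2)r^{m-2}\bar\int_{S_{n-1}}\phi\,\di\sigma$, this term feeds a nonzero multiple of $Qr^{2\omega+2}$ into the expansion and is one of the sources of the coefficient $-(1+\omega/2)^2$. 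So the statement that ``the term from $\hat g$ averages out by the divergence structure and the normalization'' is not right: the normalization converts its trace part into a quadratic quantity that contributes, rather than making it vanish. Second, the factor $(1+\omega/2)^2$ has nothing to do with the eigenvalue relation \eqref{lam}; relation \eqref{lam} enters only later, when $r^{-\omega}\bar R$ is decomposed into spherical harmonics. The factor is pure radial scaling: quadratic terms of the type $\tfrac14\partial_rh_{ij}\,\partial_rh^{ij}$ with $\partial_r(r^{\omega+2})=(\omega+2)r^{\omega+1}$ produce $\bigl(\tfrac{\omega+2}{2}\bigr)^2\bar g_{ij}\bar g^{ij}r^{2\omega+2}=(1+\omega/2)^2\,\bar g_{ij}\bar g^{ij}\,r^{2\omega+2}$, and these combine with the trace contribution above. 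With these two corrections your plan is the standard one and goes through; without them the bookkeeping that the lemma encodes --- precisely the coefficients $1/2$, $-1/4$, $-(1+\omega/2)^2$ --- would come out wrong.
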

For further details refer to \cite{Mad3}.\\
The integrals $Q$, $B$ and $C$ are given in terms of the tensor $\bar g$. Our goal is to compute them using the eigenfunctions $\varphi_k$ above. Let us define 
 
 $$b_{ij}=\sum_{k=1}^q\frac{1}{(n-2)(\nu_k+1-n)}[(n-1)\nabla_{ij}\varphi_k+\nu_k\varphi_ks_{ij}]$$

and $a_{ij}$ such that $\bar g_{ij}=a_{ij}+b_{ij}$ then, according to \eqref{dddd}, we check that 
\begin{equation}
\bar R=\bar R_b=\nabla^{ij}b_{ij}r^{\omega}\quad\text{and } \bar R_a=\nabla^{ij}a_{ij}r^{\omega}=0
\end{equation}

\noindent If $\bar g_{ij}=a_{ij}$ then $\bar R=\bar R_a=0$ and $\mu\geq \omega+1$. By Theorem \ref{aaa}
$$\bar\int_{S_{n-1}(r)}R\di\sigma=\bar\int_{S_{n-1}(r)}R_a\di\sigma<0$$ 
If $\bar g_{ij}=b_{ij}$ then
$$\bar\int_{S_{n-1}(r)}\hspace{-0.5cm}R\di\sigma=\bar\int_{S_{n-1}(r)}\hspace{-0.5cm}R_b\di\sigma=[B_b/2-C_b/4-(1+\omega/2)^2Q_b]r^{2(\omega+1)}+o(r^{2(\omega+1)})$$
where $B_b$, $C_b$ and $Q_b$ are the same integrals defined in Lemma \ref{zzz} when the considered tensor $\bar g_{ij}=b_{ij}$. We  compute them in terms of $\varphi_k$
\begin{gather*}
Q_b=\bar\int_{S_{n-1}}\bar b_{ij}\bar b^{ij}\di\sigma=\frac{n-1}{n-2}\sum_{k=1}^{q}\frac{\nu_k}{\nu_k-n+1}\bar\int_{S_{n-1}}\varphi_k^2\di\sigma\\
B_b=-(n-1)Q_b+\sum_{k=1}^{q}\nu_k\bar\int_{S_{n-1}}\hspace{-0.6cm}\varphi_k^2\di\sigma\\ C_b=-(n-1)Q_b+\frac{n-1}{n-2}\sum_{k=1}^{q}\nu_k\bar\int_{S_{n-1}}\hspace{-0.6cm}\varphi_k^2\di\sigma
\end{gather*}
To find these expressions, we used several times the identity $\nabla^ib_{ij}=-\sum_{k=1}^q\nabla_j\varphi_k$ and Stokes formula (more details are given in \cite{Aub5,Aub4} and \cite{Mad3}). In the general case, we deduce that
\begin{lemma}\label{mmm}
If  $\mu=\omega$ and $\bar g_{ij}=a_{ij}+b_{ij}$, where $b_{ij}$ is defined above, 
\begin{equation}
\bar\int_{S_{n-1}(r)}\hspace{-1cm}R\di\sigma=\bar\int_{S_{n-1}(r)}\hspace{-1cm}R_a+R_b\di\sigma\leq [B_b/2-C_b/4-(1+\omega/2)^2Q_b]r^{2(\omega+1)}+o(r^{2(\omega+1)})
\end{equation} 
and
\begin{equation}
B_b/2-C_b/4-(1+\omega/2)^2Q_b=\sum_{k=1}^q u_k\bar\int_{S_{n-1}}\varphi_k^2\di\sigma
\end{equation} 
with 
\begin{equation}
 u_k=\biggl(\frac{n-3}{4(n-2)}-\frac{(n-1)^2+(n-1)(\omega+2)^2}{4(n-2)(\nu_k-n+1)}\biggr)\nu_k
\end{equation}

\end{lemma}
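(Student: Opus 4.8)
The plan is to exploit the fact that, by Lemma~\ref{zzz}, the leading coefficient of the spherical average $\bar\int_{S_{n-1}(r)}R\di\sigma$ at order $r^{2(\omega+1)}$ is a \emph{quadratic} form in the tensor $\bar g$, namely
$$F(\bar g)=B/2-C/4-(1+\omega/2)^2Q,$$
and to analyse how $F$ behaves under the decomposition $\bar g_{ij}=a_{ij}+b_{ij}$. Writing $F(\cdot,\cdot)$ for the associated symmetric bilinear form, bilinearity gives $F(a+b)=F(a)+2F(a,b)+F(b)$, so the first assertion $F(a+b)\le F(b)$ is equivalent to $F(a)+2F(a,b)\le 0$. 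The whole inequality therefore reduces to controlling the diagonal term $F(a)$ and the cross term $F(a,b)$.

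First I would dispose of $F(a)$. Since $\bar R_a=\nabla^{ij}a_{ij}r^{\omega}=0$, the metric whose leading tensor is $a$ has vanishing degree-$\omega$ part of the scalar curvature, hence $\mu\ge\omega+1$ for it; Theorem~\ref{aaa} then applies and yields $\bar\int_{S_{n-1}(r)}R_a\di\sigma<0$. Since Lemma~\ref{zzz} identifies the leading coefficient of this average with $F(a)$, we obtain $F(a)\le 0$, which is exactly the inequality already recorded just before the statement.

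The heart of the argument is to show that the cross term vanishes, $F(a,b)=0$. Here I would integrate by parts (Stokes on $S_{n-1}$), moving derivatives off $b$ in the bilinear versions $B(a,b)$ and $C(a,b)$, and use the two defining features of the splitting: $\nabla^{ij}a_{ij}=0$ and the identity $\nabla^i b_{ij}=-\sum_{k=1}^q\nabla_j\varphi_k$. Repeated integration by parts transfers both derivatives onto $a$, producing the double divergence $\nabla^{ij}a_{ij}=0$; the commutator (curvature) terms generated on the round sphere are governed by $s_{ij}$ and by the eigenvalues $\nu_k$ of \eqref{lam}, and are designed to be absorbed by the zeroth-order contribution $Q(a,b)=\bar\int_{S_{n-1}}a_{ij}b^{ij}\di\sigma$ through the coefficient $(1+\omega/2)^2$. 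This matching is the step I expect to be the main obstacle, since it is precisely where the normalisation of $b_{ij}$ and the value of $\nu_k$ must conspire. Granting $F(a,b)=0$, we get $F(a+b)=F(a)+F(b)\le F(b)$, which is the first assertion.

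For the second assertion I would substitute the expressions for $Q_b$, $B_b$, $C_b$ computed above into $F(b)=B_b/2-C_b/4-(1+\omega/2)^2Q_b$ and collect the coefficient of $\nu_k\bar\int_{S_{n-1}}\varphi_k^2\di\sigma$. The terms not involving $Q_b$ combine to $\tfrac12-\tfrac{n-1}{4(n-2)}=\tfrac{n-3}{4(n-2)}$, while the three $Q_b$-contributions combine to $-\tfrac{(n-1)+(\omega+2)^2}{4}\,Q_b$; inserting $Q_b=\tfrac{n-1}{n-2}\sum_{k}\tfrac{\nu_k}{\nu_k-n+1}\bar\int_{S_{n-1}}\varphi_k^2\di\sigma$ turns the latter into the factor $-\tfrac{(n-1)^2+(n-1)(\omega+2)^2}{4(n-2)(\nu_k-n+1)}\,\nu_k$. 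Adding the two contributions reproduces exactly $u_k$, which completes the proof.
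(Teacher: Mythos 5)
Your proposal follows essentially the same route as the paper: the $a$-part is disposed of by Theorem \ref{aaa} (since $\bar R_a=\nabla^{ij}a_{ij}r^\omega=0$ forces $\mu\geq\omega+1$), the $b$-part is identified via Lemma \ref{zzz} with $B_b/2-C_b/4-(1+\omega/2)^2Q_b$, and your collection of coefficients from the displayed expressions for $Q_b$, $B_b$, $C_b$ correctly reproduces $u_k$, exactly as the paper does. The one step you flag as the main obstacle and only grant --- the vanishing of the cross term $F(a,b)$ of the quadratic form --- is precisely the step the paper itself does not prove: it is implicit in the equality $\bar\int_{S_{n-1}(r)}R\,\di\sigma=\bar\int_{S_{n-1}(r)}(R_a+R_b)\,\di\sigma$ of the lemma and is deferred to \cite{Aub5,Aub4} and \cite{Mad3}, where it rests on the same Stokes-formula manipulations with $\nabla^{ij}a_{ij}=0$ and $\nabla^ib_{ij}=-\sum_{k=1}^q\nabla_j\varphi_k$ that you outline.
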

$u_k$ is obtained using the expressions of $Q_b$, $B_b$ and $C_b$ above.
\vspace{0.8cm}


\section{Generalization of T.~Aubin's theorem}

\begin{theorem}\label{theo}
 If there exists $P\in M$ such that  $\omega(P)\leq (n-6)/2$ then there exists $f\in C^\infty(S_{n-1})$ with vanishing mean integral  such that
$$I_g(\varphi_\e)< \frac{n(n-2)}{4}\omega_{n-1}^{2/n}$$
\end{theorem}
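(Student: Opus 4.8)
The plan is to estimate the Yamabe functional $I_g(\varphi_\e)$ by expanding its numerator and denominator in powers of $\e$ and showing that the leading correction to Aubin's baseline value $\frac{n(n-2)}{4}\omega_{n-1}^{2/n}$ is strictly negative for a suitable choice of the angular function $f$. The numerator splits into the Dirichlet term $\int_M|\nabla_g\varphi_\e|^2\di v$ and the scalar-curvature term $\frac{n-2}{4(n-1)}\int_MR_g\varphi_\e^2\di v$; the latter is already handled by formula \eqref{scal}. First I would carry out the analogous expansion of the gradient term. Because $\varphi_\e=(1-r^{\alp}f(\xi))u_{\e,P}$ with $\alp=\omega+2$, writing $\nabla_g\varphi_\e=(1-r^{\omega+2}f)\nabla u_{\e,P}-u_{\e,P}\nabla(r^{\omega+2}f)$ and expanding $|\nabla_g\varphi_\e|^2$ produces the unperturbed Aubin term plus cross terms and quadratic terms in $f$. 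Using the metric splitting $g=\mathcal E+h$ from Lemma \ref{LLLL} (so that $g_{ij}=\delta_{ij}+O(r^{\omega+2})$) the deviation of $|\nabla_g\varphi_\e|^2$ from its Euclidean counterpart contributes at the same order $\e^{2\omega+4}$, and all the radial integrals reduce via \eqref{hhh} and the recursion relations \eqref{rela} to the $I_a^b$ constants.

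Next I would collect, at order $\e^{2\omega+4}\log\e^{-1}$ or $\e^{2\omega+4}$ (whichever dominates under the hypothesis $\omega\leq(n-6)/2$, which guarantees the relevant $I$-integrals converge rather than producing a log), the coefficient multiplying the baseline. The key structural point is that all angular dependence funnels through quantities of the form $\bar\int_{S_{n-1}}(\cdots)\di\sigma$ built from $\bar g_{ij}$, $\bar R$, and $f$. I would use the spherical-harmonic decomposition $r^{-\omega}\bar R=\sum_{k=1}^q\nu_k\varphi_k$ from \eqref{dddd} and expand $f$ in eigenfunctions of $\Delta_s$ as well. Orthogonality of the eigenspaces $E_k$ then diagonalizes the whole expression, and Lemma \ref{mmm} supplies the closed form $\sum_k u_k\bar\int\varphi_k^2\di\sigma$ for the scalar-curvature contribution coming from $\bar g_{ij}=a_{ij}+b_{ij}$.

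The crux of the argument is then a tractable minimization: after assembling everything, $I_g(\varphi_\e)-\frac{n(n-2)}{4}\omega_{n-1}^{2/n}$ equals a constant times an expression of the shape $\sum_k\big(\alpha_k\|\varphi_k\|^2-2\beta_k\langle f_k,\varphi_k\rangle+\gamma_k\|f_k\|^2\big)$, a quadratic in the Fourier coefficients $f_k$ of $f$. Choosing $f_k$ to minimize each mode independently (the optimal choice being proportional to $\varphi_k/\gamma_k$) completes the square and yields a residual of the form $\sum_k(\alpha_k-\beta_k^2/\gamma_k)\|\varphi_k\|^2$. I expect the main obstacle to be verifying that this residual coefficient is strictly negative for every mode $k$ under the dimensional constraint $\omega\leq(n-6)/2$: this is where the explicit values of $\nu_k$ from \eqref{lam}, the coefficient $u_k$ from Lemma \ref{mmm}, and the gradient-term constants must be combined, and the sign must be controlled uniformly in $k$ (equivalently, in $\nu_k$ over the admissible range $1\leq k\leq[\omega/2]$). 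Once strict negativity of each modal coefficient is established, summing over $k$ and recalling $f\neq0$ forces $I_g(\varphi_\e)<\frac{n(n-2)}{4}\omega_{n-1}^{2/n}$ for $\e$ small, and since $f$ has been built from the $\varphi_k$ it automatically satisfies $\int_{S_{n-1}}f\,\di\sigma=0$.
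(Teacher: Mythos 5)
Your outline reproduces the paper's own strategy almost step for step: the same expansion of the gradient term and of $\|\varphi_\e\|_N^{-2}$ via \eqref{hhh} and \eqref{rela}, the same reduction of everything to a quadratic functional on $S_{n-1}$ (the paper's $I_S$), the same diagonalization through the eigenfunction decomposition \eqref{dddd}, and the same mode-by-mode completion of the square: the paper's choice $f=\sum_{k=1}^q c_k\nu_k\varphi_k$ with $c_k=(n-2)^2/d_k$ is exactly your ``optimal coefficient $\beta_k/\gamma_k$'', leaving the residual $\sum_k\bigl(u_k(n-2)^2-\frac{(n-2)^4}{d_k}\nu_k^2\bigr)\bar\int_{S_{n-1}}\varphi_k^2\di\sigma$. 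So this is not a different route; it is the paper's route in sketch form.

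The genuine gap is that you stop at precisely the step that carries the mathematical content of the theorem: you ``expect the main obstacle to be verifying that this residual coefficient is strictly negative'' and then assume it. That verification is the paper's Lemma \ref{lemme poly}, namely $u_k-\frac{(n-2)^2}{d_k}\nu_k^2<0$ for all $k\leq q\leq[\omega/2]$, and it is not a routine check. The paper must pass to the quantity $U_k=(\nu_k-n+1)d_k\bigl\{(n-2)u_k/\nu_k-(n-2)^3\nu_k/d_k\bigr\}$, recognize it as a fixed quadratic polynomial $P$ evaluated at $\nu_k$, prove that $P$ is decreasing on $\mathbb R_+$ --- and this is exactly where the hypothesis $\omega+2\leq(n-2)/2$ enters a second time, beyond merely guaranteeing convergence of the $I_a^b$ integrals, which is the only role you assign to it --- and finally check that $P(\nu_{\omega/2})<0$. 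That the sign could go either way is not a hypothetical worry: the paper's remark that the related single-parameter inequality \eqref{inegde} fails for $\omega=16$ and $n$ large shows that sign control in this family of estimates is delicate, and it is the reason this theorem (rather than a formal computation) is the paper's contribution. A smaller omission: your machinery (Lemma \ref{mmm} and decomposition \eqref{dddd}) is available only under $\mu=\omega$; the case $\mu\geq\omega+1$ must be dispatched separately by Theorem \ref{aaa} (there $f=0$, i.e.\ $\varphi_\e=u_{\e,P}$, already works), as the paper does in the first line of its proof.
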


The case $\omega= 0$ of the this theorem has already been proven by T.~Aubin~\cite{Aub}. He also proved the theorem when $\mu\geq \omega+1$ (see Theorem \ref{aaa}).\\

From now until the end of this paper, we drop the letter $P$ in $\omega(P)$ and $\mu(P)$.
\begin{proof}
If $\mu\geq\omega+1$ then the inequality holds by Theorem \ref{aaa}.  So we suppose that $\mu=\omega$ until the end of the proof.  We start by computing the first integral of the Yamabe functional \eqref{yamm} with $\psi=\varphi_\e$. Using formula $|\nabla_g\varphi_\e|^2=(\partial_r\varphi_\e)^2+r^{-2}|\nabla_s\varphi_\e|^2$, we obtain:
\begin{multline*}
\int_M|\nabla_g\varphi_\e|^2\di v=\int_M|\nabla_g u_{\e,P}|^2\di v+\int_0^\delta[\partial_r(r^{(\alp)} u_{\e,P})]^2r^{n-1}\di r\int_{S_{n-1}}
\hspace{-0.5cm}f^2 \di\sigma +\\
\int_0^\delta u^2_{\e,P} r^{n+2\omega+1}\di r\int_{S_{n-1}}\hspace{-0.5cm} |\nabla f|^2 \di \sigma 
\end{multline*}
The substitution $t=r/\e$ gives
\begin{multline}\label{grad}
\int_M|\nabla_g\varphi_\e|^2\di v=(n-2)^2\omega_{n-1}I_n^{n+1}(\e)+\e^{2\omega+4}\biggl\{\int_{S_{n-1}}|\nabla f|^2 \di\sigma I_{n-2}^{2\omega+n+1}(\e)+\\
\int_{S_{n-1}}\hspace{-0.5cm}f^2\di\sigma [(\omega-n+4)^2I_n^{2\omega+n+5}(\e)+2(\alp)(\omega-n+4)I_n^{2\omega+n+3}(\e)+(\alp)^2I_n^{2\omega+n+1}(\e)]\biggr\}
\end{multline}
For $\|\varphi_\e\|_N^{-2}$, we need to  compute the Taylor expansion of :
$$\varphi_\e^N(Q)=[1-Nr^{\omega+2}f(\xi)+\frac{N(N-1)}{2}r^{2\omega+4}f^2(\xi)+o(r^{2\omega+4})]u_{\e,P}^N$$
Using the fact that $\int_{S_{n-1}} f\di\sigma(\xi)=0$ and  formula \eqref{hhh},  we conclude that 
\begin{equation*}
\begin{split}
\|\varphi_\e\|_N^N &=\int_0^\delta\int_{S_{n-1}}\hspace{-0.5cm}[1+\frac{N(N-1)}{2} r^{2(\alp)} f^2(\xi)+o(r^{2\omega+4})]r^{n-1}u^N_{\e,P} \di r\di\sigma(\xi)\\
& = \omega_{n-1}I^{n-1}_n+\frac{N(N-1)}{2}\e^{2(\alp)}\int_{S_{n-1}}\hspace{-0.5cm}f^2 \di\sigma I_n^{2\omega+n+3}+ o(\e^{2\omega+4})
\end{split} 
\end{equation*}
then
\begin{multline}\label{norm}
\|\varphi_\e\|_N^{-2}=(\omega_{n-1}I^{n-1}_n)^{-2/N}\bigl\{1\\
-(N-1)\e^{2(\alp)}\int_{S_{n-1}}\hspace{-0.5cm}f^2 \di\sigma I_n^{2\omega+n+3}/(\omega_{n-1}I^{n-1}_n)\bigr\}+o(\e^{2\omega+4})
\end{multline}

By  Eqs \eqref{grad}, \eqref{norm}, \eqref{scal} and the relationship \eqref{rela}, if $n>2\omega+6$ then :

\begin{multline*}
I_g(\varphi_\e)= \frac{n(n-2)}{4}\omega_{n-1}^{2/n}+(\omega_{n-1}I^{n-1}_n)^{-2/N}I_{n-2}^{n+2\omega+1}\e^{2\omega+4}\times \\
\biggl\{\frac{(n-2)\omega_{n-1}}{4(n-1)}\bar\int_{S(r)}r^{-2\omega-2}R_g\di \sigma -\frac{n-2}{2(n-1)}\int_{S_{n-1}}\hspace{-0.3cm}f(\xi)\bar R \di\sigma+\int_{S_{n-1}}\hspace{-0.5cm}|\nabla f|^2 \di\sigma +\\
-\frac{n(n-2)^2-(\omega+2)^2(n^2+n+2)}{(n-1)(n-2)}\int_{S_{n-1}}\hspace{-0.5cm}f^2\di\sigma\biggr\}+
o(\e^{2\omega+4})
\end{multline*}

If $n=2\omega+6$ then

\begin{multline*}
I_g(\varphi_\e)= \frac{n(n-2)}{4}\omega_{n-1}^{2/n}+(\omega_{n-1}I^{n-1}_n)^{-2/N}\e^{2\omega+4}\log\e^{-1}\times\\
\biggl\{\frac{(n-2)\omega_{n-1}}{4(n-1)}\bar\int_{S(r)}r^{-2\omega-2}R_g\di \sigma
-\frac{n-2}{2(n-1)}\int_{S_{n-1}}\hspace{-0.3cm}f(\xi)\bar R \di\sigma+\\
\int_{S_{n-1}}|\nabla f|^2 \di\sigma +(\omega+2)^2\int_{S_{n-1}}\hspace{-0.5cm}f^2\di\sigma\biggr\}+O(\e^{2\omega+4})
\end{multline*}

For further details refer to  \cite{Mad3}.\\

\noindent Let $I_S$ be the functional defined for a function $f$ on the sphere $S_{n-1}$, with zero mean integral , by
\begin{multline*}
I_S(f)=\bar\int_{S_{n-1}} \hspace{-0.3cm}4(n-1)(n-2)|\nabla f|^2-[4n(n-2)^2-4(\omega+2)^2(n^2+n+2)]f^2+\\
-2(n-2)^2f\bar R\di\sigma
\end{multline*}
This implies that if $n>2\omega+6$ 
\begin{multline}\label{eqeq1}
I_g(\varphi_\e)=\frac{n(n-2)}{4}\omega_{n-1}^{2/n}+\frac{\omega_{n-1}^{2/n}I_{n-2}^{n+2\omega+1}\e^{2\omega+4}} {4(n-1)(n-2)(I^{n-1}_n)^{2/N}}\times\\
\{(n-2)^2\bar\int_{S(r)}r^{-2\omega-2}R_g\di\sigma+I_S(f)\}+o(\e^{2\omega+4}) 
\end{multline}
and if $n=2\omega+6$ 
\begin{multline}\label{eqeq2}
I_g(\varphi_\e)=\frac{n(n-2)}{4}\omega_{n-1}^{2/n}+ 
\frac{\omega_{n-1}^{2/n}I_{n-2}^{n+2\omega+1}\e^{2\omega+4}\log\e^{-1}} {4(n-1)(n-2)(I^{n-1}_n)^{2/N}}\times\\
\{(n-2)^2\bar\int_{S(r)}r^{-2\omega-2}R_g\di\sigma+I_S(f)\}+O(\e^{2\omega+4}) 
\end{multline}

Notice that if $k\neq j$ then $I_S(\varphi_k+\varphi_j)=I_S(\varphi_k)+I_S(\varphi_j)$. Indeed, $\varphi_k$ and $\varphi_j$ are orthogonal for the standard scalar product in $H_1^2(S_{n-1})$.  
\begin{equation*}
\begin{split} 
I_S(c_k\nu_k\varphi_k) &=\bigl\{d_kc_k^2-2(n-2)^2c_k \bigr\}\nu_k^2\bar\int_{S_{n-1}}\varphi_k^2\di\sigma\\
 & =-\frac{(n-2)^4}{d_k}\nu_k^2\bar\int_{S_{n-1}}\varphi_k^2\di\sigma
\end{split}
\end{equation*} 
 
where $$d_k=4[(n-1)(n-2)\nu_k-n(n-2)^2+(\omega+2)^2(n^2+n+2)]\text{ and }c_k=\frac{(n-2)^2}{d_k}$$ 
Using \eqref{lam}, we can check easily  that $d_k$ is positive for any $1\leq k\leq [\omega/2]$.
Now, let us consider $f=\sum_{1}^qc_k\nu_k\varphi_k$. Then 
$$I_S(f)=-\sum_{1}^q\frac{(n-2)^4}{d_k}\nu_k^2\bar\int_{S_{n-1}}\varphi_k^2\di\sigma$$ 
and by  Lemma \ref{mmm}

$$(n-2)^2\bar\int_{S(r)}\hspace{-0.5cm}r^{-2\omega-2}R_g\di\sigma+I_S(f)\leq\sum_{1}^q(u_k(n-2)^2-\frac{(n-2)^4}{d_k}\nu_k^2)\bar\int_{S_{n-1}}\hspace{-0.5cm}\varphi_k^2\di\sigma+o(1)$$
The following lemma implies that 
$I_g(\varphi_\e)<\frac{n(n-2)}{4}\omega_{n-1}^{2/n}$
\end{proof}

\begin{lemma}\label{lemme poly}
For any $k\leq q\leq [\omega/2]$ the following inequality holds
$$ u_k-\frac{(n-2)^2}{d_k}\nu_k^2< 0$$
\end{lemma}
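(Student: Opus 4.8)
The plan is to clear all denominators and reduce the claim to a single quadratic polynomial inequality in $\nu_k$, and then to exploit concavity. Throughout I write $s=(\omega+2)^2$ and $\ell=\omega-2k+2$, so that by \eqref{lam} one has $\nu_k=\ell(\ell+n-2)$ with $2\le\ell\le\omega$; in particular $\nu_k\ge 2n>0$ and $\nu_k-n+1=(\ell-1)(\ell+n-1)>0$. Since $d_k>0$ was already recorded in the proof of Theorem \ref{theo}, the inequality $u_k-\frac{(n-2)^2}{d_k}\nu_k^2<0$ is equivalent, after multiplying by the positive quantity $(n-2)(\nu_k-n+1)d_k/\nu_k$ and inserting the explicit forms of $u_k$ and $d_k$, to $Q(\nu_k)<0$, where
$$Q(x)=\bigl[(n-3)(x-n+1)-(n-1)\bigl((n-1)+s\bigr)\bigr]\bigl[(n-1)(n-2)x-n(n-2)^2+s(n^2+n+2)\bigr]-(n-2)^3x(x-n+1).$$

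First I would record the shape of $Q$. A direct expansion shows that its coefficient of $x^2$ is $(n-3)(n-1)(n-2)-(n-2)^3=-(n-2)<0$, so $Q$ is concave, and that its coefficient of $x$ equals $L=-2n(n-2)^3+2s(n^2-3n-2)$. Here the dimension hypothesis enters: $\omega\le(n-6)/2$ means exactly $s\le(n-2)^2/4$, and together with $n\ge 10$ (which holds since $\omega\ge2$ forces $n\ge 2\omega+6\ge 10$) one checks $L/2\le(n-2)^2\cdot\frac{-(3n-2)(n-1)}{4}<0$. Hence the vertex $L/(2(n-2))$ of the parabola is negative, so $Q'(x)=-2(n-2)x+L<0$ for every $x>0$, and $Q$ is strictly decreasing on $(0,\infty)$.

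Because every admissible index gives $\ell\ge2$ and hence $\nu_k=\ell(\ell+n-2)\ge 2n$, this monotonicity reduces the entire lemma to the single inequality $Q(2n)<0$. Evaluating $Q$ at $x=2n$ and collecting powers of $s$, I expect an expression $Q(2n)=c_0+c_1s+c_2s^2$ in which all three coefficients are negative, namely $c_2=-(n-1)(n^2+n+2)$, $c_0=-2n(n-2)\bigl[2n+(n+1)(n-2)^2\bigr]$ and $c_1=-(n^4-3n^3+6n^2+4n+8)$, each manifestly negative for $n\ge3$. Since $s=(\omega+2)^2>0$, this yields $Q(2n)<0$, whence $Q(\nu_k)\le Q(2n)<0$ for every admissible $k$, which is the assertion.

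The main obstacle is precisely the algebraic bookkeeping in the last two steps, and its genuine subtlety is that the all-negative coefficient pattern is special to the threshold value $x=2n$: substituting a general $\nu_k=\ell(\ell+n-2)$ into $Q$ produces, as coefficient of $s$, the expression $2(n^2-3n-2)\nu_k-(n-1)(n-2)(n+2)^2$, which becomes positive once $\nu_k$ is large. Thus the concavity-plus-monotonicity reduction to the smallest value $\nu_k=2n$ is not a mere convenience but the crux of the argument, keeping $\nu_k$ small enough for the elementary sign analysis to close; and the proof of $L<0$, which powers that reduction, is exactly where the restriction $\omega\le(n-6)/2$ is genuinely used.
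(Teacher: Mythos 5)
Your proof is correct and follows essentially the same route as the paper: your polynomial $Q$ is exactly the paper's $P$, your monotonicity argument via the linear coefficient $L$ together with the bound $(\omega+2)^2\le (n-2)^2/4$ is precisely the paper's proof that $P'(x)<0$ on $\mathbb{R}_+$, and your reduction to the single value $x=2n$ is the paper's evaluation at $\nu_{\omega/2}=2n$. The only cosmetic difference is the final step: the paper deduces $P(2n)=U_{\omega/2}<0$ from the easily checked sign of $u_{\omega/2}$, whereas you expand $Q(2n)$ in powers of $s=(\omega+2)^2$ and verify that all three coefficients are negative, which amounts to the same computation.
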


\begin{proof}
Recall the expression of $\nu_k$ given in \eqref{lam}. The sequence $(U_k)$ defined by 
$$U_k:=(\nu_k-n+1)d_k\{(n-2)\frac{u_k}{\nu_k}-\frac{(n-2)^3}{d_k}\nu_k\}$$ 
is polynomial decreasing in $\nu_k$ when $\nu_k\geq 0$. In fact, $U_k=P(\nu_k)$ with $P$ the decreasing polynomial in $\mathbb R_+$, defined by
\begin{multline*}
 P(x)=[(n-1)(n-2)x-n(n-2)^2+(\omega+2)^2(n^2+n+2)]\times\\
[(n-3)(x-n+1)-(n-1)^2-(n-1)(\omega+2)^2]-(n-2)^3(x^2-(n-1)x)
\end{multline*}
The derivative of $P$ is
\begin{equation*}
 P'(x)=-2(n-2)x-2n(n-2)^3+2(n^2-3n-2)(\omega+2)^2
\end{equation*}
By assumption $\omega+2\leq (n-2)/2$ then $P$ is decreasing in $\mathbb R_+$. Hence 
$$U_k=P(\nu_k)\leq P(\nu_{\omega/2})=U_{\omega/2}$$ 
for all $k\leq \omega/2$. It easy to check that $u_{\omega/2}$ is negative so $U_k\leq U_{\omega/2}<0$.
\end{proof}

\section{Proof of the main theorem}

By Remarks \ref{remrem}, we consider only the positive case (i.e., $\inf_{g'\in [g]^G} J(g')>0$) and the case when there exists $P\in M $ such that  
$$O_G(P)=\{P_i\}_{1\leq i\leq m},\;\; m=\Ca O_G(P)=\inf_{Q\in M}\Ca O_G(Q),\;\omega\leq\frac{n-6}{2}\text{ and }P_1=P$$
Let $\tilde\varphi_{\e, i}$ be a function defined as follows:

\begin{equation}\label{varvar}
\tilde\varphi_{\e,i}(Q)=(1- r_i^{\alp} f_i(\xi))u_{\e,P_i}(Q)
\end{equation}
where $r_i=d(Q,P_i)$, the function $u_{\e,P_i}$ is defined as in \eqref{uepsilon} and $f_i$ is defined by:
\begin{equation}\label{fitil}
 f_i(Q)=c r_i^{-\omega}\nabla_g^\omega R_{(P_i)}(\exp_{P_i}^{-1}Q,\cdots,\exp_{P_i}^{-1}Q)
\end{equation} 
$\exp_{P_i}$ is the exponential map. In a geodesic coordinates system $\{r,\xi^j\}$ with origin $P$, induced by the exponential map
\begin{equation*}
f_1=c r^{-\omega}\bar R=c\sum_{k=1}^q\nu_k\varphi_k
\end{equation*}

where $\bar R$, $\varphi_k$ and $\nu_k$ are defined in  Section \ref{computations}. Thus the functions $f_i$ are defined on the sphere $S_{n-1}$. The choice of the constant $c$ is important. 

\begin{lemma}\label{lemme infg}
Suppose that  $\omega\leq (n-6)/2$. If $\omega\in [3,15]$ or if $\mathrm{deg}\bar R\geq\omega+1$ then there exists $c\in \mathbb R$ such that the corresponding functions $\tilde\varphi_{\e,i}$ satisfy :
\begin{equation}\label{inegde}
I_g(\tilde\varphi_{\e,i})<\frac{1}{4} n(n-2)\omega_{n}^{2/n}
\end{equation} 
\end{lemma}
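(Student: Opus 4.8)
The strategy is to reduce Lemma~\ref{lemme infg} to the single-point computation already carried out in Section~\ref{computations}, and then to choose the scaling constant $c$ so that the leading coefficient becomes strictly negative. First I would observe that the functions $\tilde\varphi_{\e,i}$ are translates of one another under the $G$-action: since $G$ acts by isometries and permutes the points $P_i$ of the minimal orbit transitively, the geometric quantities at each $P_i$ (the Weyl vanishing order $\omega$, the polynomial $\bar R$, the eigenfunctions $\varphi_k$ and eigenvalues $\nu_k$) agree, and $I_g(\tilde\varphi_{\e,i})$ is independent of $i$. Hence it suffices to prove \eqref{inegde} for $i=1$ with $P_1=P$, where in the geodesic chart we have $f_1 = c\,r^{-\omega}\bar R = c\sum_{k=1}^q \nu_k\varphi_k$. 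This is exactly the test function $\varphi_\e$ of Theorem~\ref{theo} with the specific choice $f = c\sum_k \nu_k\varphi_k$, except that the target constant on the right is now $\tfrac14 n(n-2)\omega_n^{2/n}$ rather than $\tfrac14 n(n-2)\omega_{n-1}^{2/n}$. The factor $\omega_n$ versus $\omega_{n-1}$ is the transition from the local Yamabe invariant of a single bubble to the multiplicity-$m$ right-hand side of \eqref{HVI}; I would invoke the Hebey--Vaugon reduction (stated in the introduction) that turns the $G$-invariant functional built from the $m$ bubbles into $m$ times the single-bubble energy, so that the strict inequality $I_g(\varphi_\e) < \tfrac{n(n-2)}{4}\omega_{n-1}^{2/n}$ is equivalent to \eqref{inegde}.

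With that reduction in place, I would substitute $f = c\sum_{k=1}^q \nu_k\varphi_k$ into the expansions \eqref{eqeq1} and \eqref{eqeq2}. Because the $\varphi_k$ are $\Delta_s$-orthogonal, the functional $I_S$ splits over $k$, and the sign of the whole correction term is governed by
\[
(n-2)^2\,\bar\int_{S(r)} r^{-2\omega-2}R_g\,\di\sigma + I_S(f)
\;\le\; \sum_{k=1}^q \Bigl( c^2\,\alpha_k - 2c\,\beta_k + u_k(n-2)^2 \Bigr)\nu_k^2\,\bar\int_{S_{n-1}}\varphi_k^2\,\di\sigma,
\]
where $\alpha_k = d_k$ and $\beta_k = (n-2)^2$ come from the computation of $I_S(c\nu_k\varphi_k)$ recorded in Section~\ref{computations}, and the $u_k(n-2)^2$ term is the contribution of $\bar R$ supplied by Lemma~\ref{mmm}. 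Each summand is a quadratic in $c$ opening upwards, minimized at $c = c_k = (n-2)^2/d_k$, and at that value the summand equals exactly $\bigl(u_k - \tfrac{(n-2)^2}{d_k}\nu_k^2\bigr)(n-2)^2\nu_k^2\,\bar\int\varphi_k^2\,\di\sigma$, which Lemma~\ref{lemme poly} shows is strictly negative. The catch is that we are allowed only \emph{one} constant $c$, not a separate $c_k$ for each $k$; so I would choose $c$ and verify that for this common value every summand remains negative.

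The main obstacle is precisely this simultaneous-negativity requirement, and it is where the numerical hypotheses $\omega\in[3,15]$ or $\deg\bar R\ge\omega+1$ enter. If $q=1$, i.e. $\deg\bar R \ge \omega+1$ forces a single eigenspace (so that $r^{-\omega}\bar R$ lies in a single $E_k$), then there is nothing to reconcile: take $c = c_k$ and the single negative term from Lemma~\ref{lemme poly} finishes the proof. For the genuinely multi-mode case I would analyze the quadratic $c\mapsto c^2 d_k - 2c(n-2)^2 + u_k(n-2)^2$ as a function of the discrete parameter $\nu_k$, using that $(\nu_k)$ is decreasing in $k$ and that $d_k$ and $u_k$ are explicit in $\nu_k$ via \eqref{lam}. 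The roots of each quadratic in $c$ form an interval $(c_k^-, c_k^+)$ on which the $k$-th term is negative, and the claim is that the intersection $\bigcap_k (c_k^-, c_k^+)$ is nonempty precisely when $\omega\le 15$; I expect this to follow by showing the intervals are nested or overlapping, which reduces to a finite system of polynomial inequalities in $n$ and $\omega$ (with the constraint $\omega+2\le (n-2)/2$ from $\omega\le(n-6)/2$). I anticipate that the bound $15$ is exactly the largest $\omega$ for which this intersection can be guaranteed uniformly in the admissible $n$, so the endpoint is sharp for the method rather than incidental, and the bulk of the real work is the verification that a common admissible $c$ exists.
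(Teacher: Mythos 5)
Your treatment of the main case $\deg\bar R=\omega$ is essentially the paper's own proof: substituting $f_1=c\sum_{k=1}^q\nu_k\varphi_k$ into \eqref{eqeq1}--\eqref{eqeq2}, splitting $I_S$ over the orthogonal modes $\varphi_k$, obtaining for each $k$ an upward quadratic in $c$ whose negativity set is an interval $(x_k,y_k)$, and reducing the existence of a common $c$ to $\bigcap_{k\le q}(x_k,y_k)\neq\varnothing$ --- this is exactly \eqref{gfgf}--\eqref{propo} (modulo a misplaced factor $\nu_k^2$ in your display: the correct $k$-th summand is $\{d_kc^2-2(n-2)^2c\}\nu_k^2+u_k(n-2)^2$ times $\bar\int\varphi_k^2\di\sigma$, which does not change the structure). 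However, your handling of the other hypothesis, $\deg\bar R\ge\omega+1$, is wrong. That hypothesis says $\mu\ge\omega+1$, i.e.\ all derivatives of $R_g$ at $P$ of order $\le\omega$ vanish; consequently $\nabla^\omega_gR(P_i)=0$, so by \eqref{fitil} $f_i\equiv0$ for every $c$, $\tilde\varphi_{\e,i}=u_{\e,P_i}$, and the inequality is precisely Aubin's Theorem \ref{aaa} (the paper takes $c=0$). It does \emph{not} mean $q=1$: the spectral decomposition \eqref{dddd}, the numbers $u_k$, $d_k$ and the functional $I_S$ are all constructed under the assumption $\mu=\omega$, and when $\mu\ge\omega+1$ there is no decomposition $r^{-\omega}\bar R\in\bigoplus_k E_k$ to speak of. Conversely, $q=1$ occurs \emph{inside} the case $\mu=\omega$ (for instance $\omega=3$ forces $q\le[\omega/2]=1$). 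So your identification of ``$q=1$'' with ``$\deg\bar R\ge\omega+1$'' conflates the two branches of the lemma, and the argument you give for that branch does not apply as stated, even though the correct argument for it is immediate.

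Two further points. First, the decisive quantitative step --- that for $5\le\omega\le15$ (together with the special cases $\omega=3,4$) the intervals $(x_k,y_k)$ really do intersect --- is only announced in your proposal (``I expect this to follow''), whereas it is the actual content behind the bound $15$: the paper proves $x_k<y_j$ for $k<j$ from the monotonicity of $(d_k)$, reduces $x_j<y_k$ to inequality \eqref{dfdfdf}, expands each $\Delta_k$ as an explicit rational function of $n$, bounds $\sqrt{\Delta_k}$ from below by \eqref{efef}, and then verifies the resulting polynomial inequalities for each $\omega\le15$ by computer algebra; without this verification the lemma is not proved, and the remark following it shows the inequality genuinely fails for $\omega=16$ and $n$ large, so the step cannot be waved through. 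Second, your explanation of the constant $\omega_n^{2/n}$ versus $\omega_{n-1}^{2/n}$ as a multiplicity-$m$ effect is a misattribution: the lemma concerns a single function $\tilde\varphi_{\e,i}$ supported in one ball $B_{P_i}(\delta)$, and the factor $(\Ca O_G(P))^{2/n}=m^{2/n}$ only enters later, in the proof of the main theorem, where $\phi_\e=\sum_i\tilde\varphi_{\e,i}$ and $I_g(\phi_\e)=m^{2/n}I_g(\varphi_\e)$. The discrepancy between $\omega_n$ and $\omega_{n-1}$ here is a notational inconsistency of the paper itself, and no ``Hebey--Vaugon reduction'' is needed (or available) inside the lemma.
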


\begin{remarks}
\begin{enumerate} 
\item We proved inequality of this lemma for any   $\omega\leq (n-6)/2$, using test function  $\varphi_\e$ (see Theorem \ref{theo}).  We notice that the  difference between  $\varphi_\e$ and $\tilde\varphi_{\e,i}$ is on the construction of the corresponding functions $f$ and $f_i$ respectively. From $\tilde\varphi_{\e,i}$ we define a $G-$invariant function (see proof of the main theorem below), this property is not possible with the function  $\varphi_\e$.
\item For $\omega=16$ and $n$  sufficiently big, we can check that for any $c\in\mathbb R$, inequality \eqref{inegde} is false. 
\end{enumerate}
\end{remarks}
\begin{proof}
1. If $\mathrm{deg}\bar R\geq \omega+1 $, then by Theorem  \ref{aaa}
$$I_g(u_{\e,P_i})<\frac{n(n-2)}{4}\omega_{n}^{2/n}$$
It is sufficient to take  $c=0$, hence $\tilde\varphi_{\e,i}=u_{\e,P_i}$.\\

2. If $\mathrm{deg}\bar R= \omega $. Using estimates given in the proof of Theorem \ref{theo} (see \eqref{eqeq1}, \eqref{eqeq2}), it is sufficient to show that there exists  $c\in\mathbb R$ such that
\begin{equation}\label{gfgf}
I_S( f_1)+(n-2)^2\bar\int_{S(r)}r^{-2\omega-2}R_g\di\sigma_r<0
\end{equation}
We keep the notations used in the proof of Theorem \ref{theo}. Thus
\begin{equation*}
I_S(f_1)=\sum_{k=1}^q I_S(c\nu_k\varphi_k)=\bigl\{d_kc^2-2(n-2)^2c \bigr\}\nu_k^2\bar\int_{S_{n-1}}\varphi_k^2\di\sigma
\end{equation*}
\begin{equation*}
\text{and }\bar\int_{S(r)}r^{-2\omega-2}R_g\di\sigma_r=\sum_{k=1}^q u_k\bar\int_{S_{n-1}}\varphi_k^2\di\sigma
\end{equation*}
To prove inequality \eqref{gfgf}, it is sufficient to prove that
\begin{equation}\label{ineggg}
\forall k\leq q \quad \frac{d_k}{2(n-2)}c^2-(n-2)c+(n-2)\frac{u_k}{2\nu_k^2}<0
\end{equation} 
The left side of the inequality above is a second degree polynomial with variable $c$, his discriminant is:
\begin{equation}\label{deltak}
\Delta_k=(n-2)^2-\frac{d_ku_k}{\nu_k^2}
\end{equation}
Using Lemma \ref{lemme poly}, we deduce that for any $k\leq q$, $\Delta_k>0$. Hence, the polynomial above admits two different roots  denoted $x_k<y_k$ and given by
\begin{equation*}
x_k=\frac{(n-2)^2-(n-2)\sqrt{\Delta_k}}{d_k},\qquad y_k=\frac{(n-2)^2+(n-2)\sqrt{\Delta_k}}{d_k}
\end{equation*}
Inequality \eqref{ineggg} holds if and only if
\begin{equation}\label{propo}
\bigcap_{k=1}^q(x_k,y_k)\neq \varnothing
\end{equation}
The sequence $(d_k)_{k\leq[\omega/2]}$ decreases. It is easy to check that
\begin{equation}\label{ine solut}
 \forall k< j\leq [\frac{\omega}{2}]\qquad x_k< y_j
\end{equation}
Hence intersection \eqref{propo} is not empty if 
\begin{equation}\label{ine solut2}
 \forall k< j\leq [\frac{\omega}{2}]\qquad x_j< y_k
\end{equation}
We also check that  if  $\omega$ is even,   $u_{\omega/2}<0$, which implies $x_{\omega/2}<0$.
\begin{itemize}
\item[$i.$] If $\omega=3$ then $q=1$, intersection above is not empty. It is sufficient to take $c=(x_1+y_2)/2$. 
\item[$ii.$] If $\omega=4$ then $k\in\{1,2\}$, $x_2<0$ (because $u_2<0$) and $0<x_1<y_2$. Hence intersection $]x_1,y_1[\cap]x_2,y_2[$ is not empty.
\item[$iii.$]If $5\leq \omega\leq 15$, it is sufficient to prove \eqref{ine solut2} which is equivalent to prove that
\begin{equation}\label{dfdfdf}
\forall k< j \leq [\frac{\omega}{2}] \quad (n-2)(d_j-d_k)+d_k\sqrt{\Delta_j}+d_j\sqrt{\Delta_k}>0
\end{equation}
Notice that  $\Delta_k$ given by \eqref{deltak} is a rational fraction in $n$. By straightforward computations, we check that there exists reel numbers $a_k,\;b_k,\;e_k,\;h_k$ and $s_k$  which depend on $k$ and $\omega$ such that
\begin{gather}
\Delta_k=a_kn^2+b_kn+e_k+\frac{h_k}{n-2}+\frac{s_k}{\nu_k+1-n}\\ 
\sqrt{\Delta_k}>\sqrt{a_k}(n+\frac{b_k}{2a_k})\label{efef}
\end{gather}

Inequality  \eqref{dfdfdf} holds if we use \eqref{efef}.\\
The expressions of the reel numbers above are known explicitly (we used the software Maple to compute them, see \cite{Mad3}). For simplicity, we omit to give these expressions.  
\end{itemize}
\end{proof}

\begin{proof}[\textbf{Proof of the main theorem}] The orbit of  $P$ under the action of $G$ is supposed to be minimal (i.e. $\Ca O_G(P)=\inf_{Q\in M}\Ca O_G(Q)$). Without loss of generality, we suppose that $3\leq \omega\leq (n-6)/2$, because if $\omega>(n-6)/2$ or $\omega\leq 2$, we conclude using Theorem~\ref{HV theorem}. From functions  $\tilde\varphi_{\e,i}$ defined by \eqref{varvar}, we define the function $\phi_\e$ as follows: 
$$\phi_\e=\sum_{k=1}^m\tilde\varphi_{\e,i}$$
$\phi_\e$ is $G-$invariant. In fact, for any $\sigma \in G$, such that $\sigma(P_i)=P_j$ 
$$u_{\e,P_i}=u_{\e,P_j}\circ\sigma\text{ and } f_i=f_j\circ \sigma$$  
$f_i$ are defined by \eqref{fitil}, we deduce that
$$\tilde\varphi_{\e,i}=\tilde\varphi_{\e,j}\circ\sigma$$
The support of $\tilde\varphi_{\e,i}$ is included in the ball $B_{P_i}(\delta)$. We choose $\delta$ sufficiently small such that for all integers $i\neq j$ in  $[ 1,m] $, intersection $B_{P_j}(\delta)\cap B_{P_i}(\delta)=\varnothing$. Thus
$$I_g(\phi_\e)=(\mathrm{card}O_G(P))^{2/n}I_g(\varphi_\e)$$
By Lemma \ref{lemme infg}, we conclude that 
$$I_g(\phi_\e)<\frac{n(n-2)}{4}\omega_{n-1}^{2/n}(\Ca O_G(P))^{2/n}$$
It remains to notice that if $\tilde g=\phi_\e^{4/(n-2)}g$ then
$$J(\tilde g)=4\frac{n-1}{n-2}I_g(\phi_\e)<n(n-1)\omega_{n-1}^{2/n}(\mathrm{card}O_G(P))^{2/n}$$
where $\e$ is sufficiently smaller than $\delta$.
\end{proof}

\begin{proof}[\textbf{Proof of the Corollary \ref{cor prin}}] 
Suppose that the orbit of  $P$ under the action of $G$ is minimal  (otherwise the conjecture is obvious). \\
If $\omega=\omega(P)> [(n-6)/2]$, we conclude using  Theorem \ref{HV theorem}.\\
If $\omega\leq [(n-6)/2] \leq 15$, we conclude using main theorem.
\end{proof}


\bibliographystyle{amsplain}

\end{document}